\newcommand{\Ab}{\mathbf A}
\newcommand{\R}{\mathbb R}
\newcommand{\E}{\mathrm{E}_{\rm gs}(\kappa, H)}
\DeclareMathOperator{\curl}{curl}
\newtheorem{thm}{Theorem}[section]
\newtheorem{lem}[thm]{Lemma}
\theoremstyle{remark}
\numberwithin{equation}{section}
\title[Energy of Bulk superconductivity]
{A new formula for the energy of bulk superconductivity}
\author{Ayman Kachmar}
\address[A. Kachmar]{Department of Mathematics, Lebanese University, Hadat, Lebanon}
\email{ayman.kashmar@gmail.com}
\date{\today}
\begin{document}

\begin{abstract}
The energy of a type~II superconductor submitted to an external magnetic field of intensity close to the second critical field is given by the celebrated Abrikosov energy. If the external magnetic field is comparable to and  below the second critical field, the energy is given by a reference function obtained as a special (thermodynamic) limit of a non-linear energy. In this note, we give a new formula for this reference energy. In particular, we obtain it as a special  limit of a {\it linear} energy defined over  configurations  normalized in the $L^4$-norm.
\end{abstract}

\maketitle 

\section{Introduction}\label{hc2-sec:int}

\subsection{A background}

The behavior of a superconductor  subjected to an external magnetic field varies as the intensity of the applied magnetic field changes. That has been observed early in the physics literature  on theoretical and experimental grounds.   There are two important key observations regarding a specific class of materials, called type~II superconductors, namely, the   formation of Abrikosov lattices and the persistence of surface superconductivity. Abrikosov lattices occur when the intensity of the magnetic field is near a special value, called the   
{\it second critical field} and denoted by $H_{C_2}$. If the intensity of the external magnetic field is increased above this value, then superconductivity disappears from the bulk of the sample and remains on a (part) of the surface of the material. This phenomenon persists until the intensity of the applied magnetic field reaches another special value, called the {\it third critical field} and denoted by $H_{C_3}$. When the intensity of the applied magnetic field is increased further, superconductivity is destroyed everywhere in the sample, which is set into the normal state.  The reader may consult the book of de\,Gennes \cite{dG} for the description of these important observations. Both phenomena, {\it Abrikosov lattices} and {\it surface superconductivity} where observed by theoretically investigating the Ginzburg-Landau model. Ginzburg and Landau proposed the model on a phenomenological basis to describe the response of a superconductor to an external magnetic field.

Mathematically, the Ginzburg-Landau model is a functional defined on a certain class of configurations. The physically relevant states of the superconductor are those corresponding to minimizing configurations (and critical points) of the functional. The Abrikosov lattice is distinguished by a special behavior of the minimizing configurations. The same applies for the surface superconductivity phenomenon. The two monographs \cite{FH-b, SS-b} contain many mathematical results regarding the Ginzburg-Landau model  together with the discussion of their significance in physics.

\subsection{The Ginzburg-Landau model}

Here we describe the Ginzburg-Landau model for a superconducting sample, occupying an infinite cylindrical domain.  The cross section of the cylinder is assumed a smooth and simply connected open subset $\Omega$ of $\R^2$. 

The sample is  subjected to an external magnetic field with direction parallel to the axis of the cylinder. The intensity of the external magnetic field is assumed constant.

The superconducting material is distinguished by a characteristic parameter $\kappa>0$. When $\kappa$ is large, the material is of Type~II. The intensity of the external magnetic field is denoted by a parameter $h_{\rm ex}$.

The behavior of the superconductor is described by a {\it wave function} $\psi:\Omega\to\mathbb C$ and a {\it vector field} $\Ab:\Omega\to\R^2$.  The significance of $\psi$ and $\Ab$ is as follows. $|\psi|^2$ measures the density of the superconducting Cooper pairs, whose presence is necessary to observe  the superconductivity phenomenon,  and $\curl\Ab$ measures the induced magnetic field in the
sample, if present. At equilibrium, the configuration $(\psi,\Ab)$ minimizes the following energy, that we will call the Ginzburg-Landau energy:
\begin{equation}\label{eq-3D-GLf}
\mathcal E(\psi,\Ab)=\int_\Omega \left(|(\nabla-i\kappa H\Ab)\psi|^2+\frac{\kappa^2}2(1-|\psi|^2)^2+\kappa^2H^2|\curl\Ab-1|^2\right)\,dx\,,
\end{equation}
in the configurations space 
$$(\psi,\Ab)\in H^1(\Omega;\mathbb C)\times H^1(\Omega;\mathbb
R^2)\,,$$
where $H^1$ denotes the usual Sobolev space.

We introduce the ground state energy of the functional in
\eqref{eq-3D-GLf} as follows,
\begin{equation}\label{eq-gse}
\E=\inf\{\mathcal E(\psi,\Ab)~:~(\psi,\Ab)\in H^1(\Omega;\mathbb
C)\times H^1(\Omega;\mathbb R^2)\}\,.
\end{equation}
Here we use the version of the functional as in \cite{FH-b}. In this version, the intensity of the external magnetic field is measured by $h_{\rm ex}=\kappa H$, where $H>0$ is the the parameter marking the variation of the magnetic field. One way to detect the response of the superconductor to the external magnetic field is to mark changes in the energy as the parameter $H$ changes. A large part of the mathematical literature is devoted to the computation of \eqref{eq-gse} when $h_{\rm ex}$ is a function of $\kappa$ and $\kappa\to\infty$. The reader is referred  to  the two monographs \cite{FH-b,SS-b} for a detailed discussion  on the behavior of the energy in \eqref{eq-gse}. In two special regimes, the energy in \eqref{eq-gse} is given to leading order by the {\it Abrikosov} and {\it bulk} constants introduced below in \eqref{eq:Abcst} and \eqref{eq:enBLK} respectively.

\subsection{The Abrikosov energy}

We introduce the Abrikosov energy in a simple situation. More general situations are discussed in \cite{AfSe} but in the asymptotic limit considered here, they give rise to the same  Abrikosov constant in \eqref{eq:Abcst} below. Let $R>0$ and consider the lattice in $\R^2$ generated by the square
\begin{equation}\label{eq:QR}
Q_R=(-R/2,R/2)\times(-R/2,R/2)\,.
\end{equation}
Let us suppose that $R^2\in 2 \pi\mathbb N$. Denote by $P_R$ the self-adjoint operator
$$P_R=-(\nabla-i\Ab_0)^2\quad{\rm in~}L^2_{\rm mag, per}(Q_R)\,,$$
defined via the closed quadratic form
$$q_R(u)=\int_{Q_R}|(\nabla-i\Ab_0)u|^2\,dx\,.$$
Here, the vector potential $\Ab_0$ is  defined as follows,
\begin{equation}\label{eq:Ab0}
\Ab_0(x_1,x_2)=-\frac12(-x_2,x_1)\,,\end{equation}
and generates a unit constant magnetic field,
$$\curl\Ab_0=1\,.$$
The space $L^2_{\rm mag,per}(Q_R)$ and the form domain  $D(q_R)$ are defined as follows,
\begin{multline*}
L^2_{\rm mag,per}(Q_R)=\{u\in L^2_{\rm loc}(\R^2)~:~u(x_1+R,x_2)=e^{iRx_1/2}u(x_1,x_2)\\
{\rm ~and~}u(x_1,x_2+R)=e^{-iRx_2/2}u(x_1,x_2)\}\,,\end{multline*}
and
$$D(q_R)=H^1_{\rm mag,per}(Q_R):=\{u\in L^2_{\rm mag,per}(Q_R)~:~(\nabla-i\Ab_0)u\in L^2_{\rm loc}(\R^2)\}\,.$$
The space $L^2_{\rm mag,per}(Q_R)$ is a Hilbert space with the following inner product
$$\langle u,v\rangle_{L^2_{\rm mag,per}(Q_R)}=\int_{Q_R}u\,\overline{v}\,dx\,.$$
The spectrum of the operator $P_R$ is explicitly given by the Landau levels,
$$\sigma(P_R)=\{(2n-1)~:~n\in\mathbb N\}\,,$$
and, as long as $R^2\in2\pi\mathbb N$, all the eigenvalues have finite multiplicity.

Now, we introduce the Abrikosov energy functional (in the square lattice),
\begin{equation}\label{eq:enAb}
\mathcal E_{\rm Ab}(u)=\int_{Q_R}\left(\frac12|u|^4-|u|^2 \right)\,dx\,,\end{equation}
defined for configurations $u$ in the finite dimensional space $E_R:={\rm Ker}(P_R-{\rm Id})$, the first eigenspace of the operator $P_R$.
Minimizing the Abrikosov energy functional leads us to introduce the following quantity,
$$e_{\rm Ab}(R)=\inf\{\mathcal E_{\rm Ab}(u)~:~u\in E_R\}\,.$$
It is a known fact that \cite{AfSe, FK-cpde}:
\begin{equation}\label{eq:Abcst}
\lim_{\substack{R\to\infty\\R^2\in2\pi\mathbb N}}\frac{e(R)}{R^2} =E_{\rm Ab}\,.\end{equation}
Here, $E_{\rm Ab}$ is a universal constant that we will call the {\it Abrikosov constant} (or energy). It is known that $E_{\rm Ab}\in[-\frac12,0)$.

\subsection{The reference `bulk' energy}~

In this section, we recall how one can define the Abrikosov constant in \eqref{eq:Abcst} via a non-linear energy. Let $b\in(0,1]$ be a fixed constant, $R>0$ and $Q_R$ be the square in \eqref{eq:QR}. Define the following non-linear functional in $H^1(Q_R)$,
\begin{equation}\label{eq:enBLK}
\mathcal E_{\rm blk}(u;b,R)=\int_{Q_R}\left(b|(\nabla-i\Ab_0)u|^2-|u|^2+\frac12|u|^4\right)\,dx\,.\end{equation}
Here $\Ab_0$ is the magnetic potential in \eqref{eq:Ab0}. By minimizing this functional in various spaces, we get the following ground state energies,
\begin{align*}
&e^D(b,R)=\inf\{\mathcal E_{\rm blk}(u;b,R)~:~u\in H^1_0(Q_R)\}\,,\\
&e^N(b,R)=\inf\{\mathcal E_{\rm blk}(u;b,R)~:~u\in H^1(Q_R)\}\,,\\
&e^p(b,R)=\inf\{\mathcal E_{\rm blk}(u;b,R)~:~u\in H^1_{\rm mag,per}(Q_R)\}\,.
\end{align*}
It is a known fact that \cite{AfSe, Att, FK-cpde}
\begin{equation}\label{eq:Eblk}
\lim_{R\to\infty}\frac{e^{\circ}(b,R)}{R^2}=E_{\rm blk}(b)\quad (\circ\in\{D,N,p\})\,,
\end{equation}
where $E_{\rm blk}(\cdot)$ is a continuous and increasing function such that $E_{\rm blk}(0)=-\frac12$ and $E_{\rm blk}(1)=0$.
The function $E_{\rm blk}(\cdot)$ is independent of the boundary condition and will be called the reference bulk energy.

The Abrikosov constant in \eqref{eq:Abcst} can be defined in the alternative way \cite{AfSe, FK-cpde},
\begin{equation}\label{eq:Eblk-Ab}
E_{\rm Ab}=\lim_{b\to1_-}\frac{E_{\rm blk}(b)}{(b-1)^2}\,.
\end{equation}
This formula displays a relationship between the non-linear simplified Ginzburg-Landau energy in \eqref{eq:enBLK} and the Abrikosov energy in \eqref{eq:enAb}. 

\subsection{The connection with the full GL functional} 
To illustrate how the quantities discussed so far are useful, let us cite the following two results from \cite{SS02,FKhc2}. (We will use the following notation: If $a(\kappa)$ and $b(\kappa)$ are two non-negative functions of $\kappa$, then by writing $a(\kappa)\ll b(\kappa)$ we mean that $a(\kappa)=\delta(\kappa)b(\kappa)$ and $\delta(\kappa)\to0$ as $\kappa\to\infty$.)
\begin{enumerate}
\item If $b\in(0,1]$ is a constant, $H=b\kappa$ and $\kappa\to\infty$, then the ground state energy in \eqref{eq-gse} satisfies,
$$\E=\kappa^2|\Omega|E_{\rm blk}(b)+o(\kappa^2)\,.$$
\item If $H=\kappa-\mu(\kappa)$ and $\sqrt{\kappa}\ll\mu(\kappa)\ll \kappa$, then as $\kappa\to\infty$,
\begin{equation}\label{eq:gseFK}
\E=[\kappa-H]^2|\Omega|E_{\rm Ab}+o\big([\kappa-H]^2\big)\,.
\end{equation}
\end{enumerate}

\subsection{The new formula}
Here, we will define the function $E_{\rm blk}(\cdot)$ via a {\it non-linear} eigenvalue problem. Let again $b\in(0,1]$, $R>0$, $Q_R$ be the square in \eqref{eq:QR} and $\Ab_0$ be the magnetic potential in \eqref{eq:Ab0}. Let us define the {\rm linear} functional,
\begin{equation}\label{eq:enLin}
\mathcal E_{\rm lin}(u;b,R)=\int_{Q_R}\Big(b|(\nabla-i\Ab_0)u|^2-|u|^2\Big)\,dx\,.
\end{equation}
We will minimize this functional in various spaces but for the constrained configurations 
$$\int_{Q_R}|u|^4\,dx=1\,.$$
That way, we get the following ground state energies,
\begin{align}
&\mathfrak{m}^D(b,R)=\inf\Big\{\,
\frac{\mathcal E_{\rm lin}(u;b,R)}{\left(\displaystyle\int_{Q_R}|u|^4\,dx\right)^{1/2}}~:~u\in H^1_0(Q_R)\setminus\{0\}\,\Big\}\,,\label{eq:gse-const-D}\\
&\mathfrak{m}^N(b,R)=\inf\Big\{\,
\frac{\mathcal E_{\rm lin}(u;b,R)}{\left(\displaystyle\int_{Q_R}|u|^4\,dx\right)^{1/2}}~:~
{u\in H^1(Q_R)\setminus\{0\}}\,\Big\}\,,\label{eq:gse-const-N}\\
&\mathfrak{m}^p(b,R)=\inf\Big\{
\frac{\mathcal E_{\rm lin}(u;b,R)}{\left(\displaystyle\int_{Q_R}|u|^4\,dx\right)^{1/2}}~:~
u\in H^1_{\rm mag,per}(Q_R)\setminus\{0\}\,\Big\}\,.\label{eq:gse-const-p}
\end{align}
We will prove that,
$$\lim_{R\to\infty}\frac{\mathfrak{m}^{\circ}(b,R)}{R}=E_{\rm new}(b)\quad (\circ\in\{D,N,p\})\,,$$
and that 
\begin{equation}\label{eq:Enew=Eblk}
E_{\rm new}(b)=-\sqrt{-2E_{\rm blk}(b)}\,.
\end{equation}
More precisely:

\begin{thm}\label{thm:newF}
Let $b\in(0,1)$. There exist two constants $C>0$ and $R_0>1$ such that, for all $R\geq R_0$ and $\circ\in\{D,N,p\}$,
$$-\big(-2E_{\rm blk}(b)\big)^{1/2}-\frac{C}{R}\big(-2E_{\rm blk}(b)\big)^{-1/2}\leq \frac{\mathfrak{m}^{\circ}(b,R)}{R}\leq 
-\big(-2E_{\rm blk}(b)\big)^{1/2}+\frac{C}{R}\,.$$
In particular, we may define the function $E_{\rm blk}(\cdot)$ via the formula,
$$E_{\rm blk}(b)=-\frac12\left(\lim_{R\to\infty}\frac{\mathfrak{m}^{\circ}(b,R)}{R}\right)^2\qquad\big(b\in(0,1)\big)\,,$$
with  $\circ\in\{D,N,p\}$.

For the Dirichlet boundary condition, the lower bound is as follows
$$ -\big(-2E_{\rm blk}(b)\big)^{1/2}\leq \frac{\mathfrak{m}^{D}(b,R)}{R},$$
and is valid for all $b\in(0,1]$ and $R\geq 1$. 
\end{thm}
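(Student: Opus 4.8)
The plan is to reduce the whole statement to a single exact algebraic identity valid at fixed $R$, namely
$$\mathfrak{m}^\circ(b,R)=-\sqrt{-2\,e^\circ(b,R)}\qquad(\circ\in\{D,N,p\}),$$
and then to feed in the thermodynamic behaviour of $e^\circ(b,R)$ together with its convergence rate. To prove the identity I would write $\mathcal E_{\rm blk}(u;b,R)=\mathcal E_{\rm lin}(u;b,R)+\tfrac12\int_{Q_R}|u|^4\,dx$ and exploit that each of the three admissible spaces is a linear space, hence stable under $u\mapsto tu$. For a fixed $v$ with $\int_{Q_R}|v|^4\,dx=1$ one has $\mathcal E_{\rm blk}(tv)=t^2\ell+\tfrac12t^4$ with $\ell=\mathcal E_{\rm lin}(v)$, and minimising the scalar map $t\mapsto t^2\ell+\tfrac12t^4$ gives $-\tfrac12\ell^2$ when $\ell<0$ and $0$ otherwise. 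Taking the infimum over the direction $v$ and using that $\ell\mapsto-\tfrac12\ell^2$ is decreasing on $(-\infty,0)$ turns this into $-\tfrac12(\mathfrak{m}^\circ)^2$, whence $e^\circ=-\tfrac12(\mathfrak{m}^\circ)^2$ and the identity follows. The proviso $\mathfrak{m}^\circ<0$ (equivalently $e^\circ<0$) holds for $b\in(0,1)$ since a lowest-Landau-level trial state, cut off near $\partial Q_R$ in the Dirichlet case, gives $\mathcal E_{\rm lin}(v)=(b-1)\|v\|_{L^2}^2<0$.

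Next I would transport energy estimates through the increasing function $g(x)=-\sqrt{-2x}$, noting that $\mathfrak{m}^\circ/R=g(e^\circ/R^2)$. Combined with \eqref{eq:Eblk} and continuity of $g$, this already gives $\lim_{R\to\infty}\mathfrak{m}^\circ(b,R)/R=-\sqrt{-2E_{\rm blk}(b)}$ and hence the displayed formula $E_{\rm blk}(b)=-\tfrac12\big(\lim_R\mathfrak{m}^\circ/R\big)^2$. For the quantitative bounds, the stated estimates are exactly equivalent, via the identity, to two-sided energy bounds $R^2E_{\rm blk}(b)-a(b)R\le e^\circ(b,R)\le R^2E_{\rm blk}(b)+c(b)R$. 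Applying $g$ together with the elementary inequalities $\sqrt{X+h}\le\sqrt X+\tfrac{h}{2\sqrt X}$ and $\sqrt{X-h}\ge\sqrt X-\tfrac{h}{\sqrt X}$ with $X=-2E_{\rm blk}(b)$ produces, respectively, the lower bound carrying the factor $(-2E_{\rm blk}(b))^{-1/2}$ and the upper bound with a $b$-dependent constant; the asymmetry in the statement is precisely this elementary $g$-propagation. The energy bounds themselves refine \eqref{eq:Eblk}: the upper bound comes from a trial state obtained by tiling $Q_R$ with (quasi-)minimisers of the thermodynamic problem and cutting off in a boundary layer of width $O(1)$, costing $O(R)$, and the lower bound from a magnetic IMS localisation of $\mathcal E_{\rm blk}$ into cells, again with an $O(R)$ boundary error.

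For the clean Dirichlet bound I would avoid any rate and use exact subadditivity. A Dirichlet (quasi-)minimiser on $Q_R$, extended by zero and reproduced by magnetic lattice translations, tiles $Q_{nR}$ and yields $e^D(b,nR)\le n^2\,e^D(b,R)$, so that $e^D(b,R)/R^2\ge\lim_{n\to\infty}e^D(b,nR)/(nR)^2=E_{\rm blk}(b)$ for every $R\ge1$ and $b\in(0,1]$. Inserting $e^D(b,R)\ge R^2E_{\rm blk}(b)$ into the identity (and, in the degenerate case $e^D\ge0$, using $\mathfrak{m}^D\ge0$ directly) gives $\mathfrak{m}^D(b,R)/R\ge-\sqrt{-2E_{\rm blk}(b)}$.

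The main obstacle is the second step: the energy bounds with the correct $O(R)$ rate, and especially the lower bound for the Neumann problem, where the admissible space is largest and one must simultaneously control the boundary layer and the localisation error while tracking the dependence on $b$ as $b\to1^-$, where $E_{\rm blk}(b)\to0$ and the constants degenerate. Once these are in place, the exact identity and the $g$-propagation are routine.
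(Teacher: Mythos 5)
Your proposal is correct, and it takes a genuinely different route from the paper. The paper proves the two bounds by two separate variational arguments: for the upper bound it takes a minimizer $u_{b,R}$ of $e^{\circ}(b,R)$, uses the Euler--Lagrange equation to pin down $\int_{Q_R}|u_{b,R}|^4$ (its Lemma~\ref{lem:FK-ap}), and tests $\mathfrak{m}^{\circ}$ against $u_{b,R}$; for the lower bound it takes a minimizer of $\mathfrak{m}^{\circ}(b,R)$, rescales it to have the ``correct'' $L^4$-norm $(-2R^2E_{\rm blk}(b))^{1/4}$, and compares with $e^{\circ}(b,R)$. Your exact fixed-$R$ identity $e^{\circ}(b,R)=-\tfrac12\min\big(\mathfrak{m}^{\circ}(b,R),0\big)^2$, obtained by optimizing the quartic $t\mapsto t^2\ell+\tfrac12 t^4$ along each $L^4$-normalized direction, subsumes both steps at once: it needs no minimizers, no Euler--Lagrange equation, and no analogue of Lemma~\ref{lem:FK-ap}, and it exhibits \eqref{eq:Enew=Eblk} as the thermodynamic limit of an identity that is already exact at finite $R$. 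The transport of the two-sided energy bounds through $g(x)=-\sqrt{-2x}$ is carried out correctly, including the observation that the $(-2E_{\rm blk}(b))^{-1/2}$ factor lands asymmetrically and that the remaining $b$-dependence is absorbed into the constants, which the theorem permits. One remark: what you single out as ``the main obstacle'' --- the two-sided bounds $R^2E_{\rm blk}(b)-CR\le e^{\circ}(b,R)\le R^2E_{\rm blk}(b)+CR$ and the clean Dirichlet inequality $e^D(b,R)\ge R^2E_{\rm blk}(b)$ --- is not an obstacle at all in the paper's economy: these are exactly Lemma~\ref{lem:FK}, quoted from the literature (Fournais--Kachmar and Attar), and your proposed proofs of them (tiling/subadditivity for the upper and Dirichlet bounds, magnetic IMS localisation for the Neumann lower bound) are precisely the arguments given there. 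Modulo citing that lemma instead of reproving it, your argument is complete and arguably cleaner than the paper's.
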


Let us compare the various ground state energies  discussed so far. The Abrikosov functional in \eqref{eq:enAb} is defined via a simple expression that does not involve differentiation operations but is minimized in the non-trivial space of the ground state  eigenfunctions of the operator $P_R$. Among the remaining functionals we discussed, the expression of the non-linear  functional in \eqref{eq:enBLK} is  the most complicated, but it is minimized in the space of Sobolev functions (this space is less complicated than the space of the ground state eigenfunctions  of the operator $P_R$). The expression of the functional in \eqref{eq:enLin} is linear, but again this functional is minimized for constrained configurations. The three functionals serve in defining the Abrikosov  constant $E_{\rm Ab}$ in \eqref{eq:Abcst}, but each time one singles a simpler  expression of the functional,  a price is paid through a constraint in the definition of the ground state energy. 

Recently, there is a progress in the analysis of  {\it semi-classical}  non-linear eigenvalue problems with a magnetic field (cf. \cite{FR}). The result in Theorem~\ref{thm:newF} may fall in this  area as well.

The rest of the paper is decomposed into three sections. The proof of Theorem~\ref{thm:newF} occupies Sections~\ref{sec:ub} and \ref{sec:lb}. In Section~\ref{sec:ap}, we apply Theorem~\ref{thm:newF} to write a new proof of an important theorem by Almog devoted to the full Ginzburg-Landau functional (cf. \cite[Thm.~3.3]{Al}).

\section{Proof of Theorem~\ref{thm:newF}: Upper bound}\label{sec:ub}

Let $b\in(0,1)$. This section is devoted to the proof of the following inequality
\begin{equation}\label{eq:Enew<}
\mathfrak{m}^{\circ}(b,R)\leq -R\big(-2E_{\rm blk}(b)\big)^{1/2}+C\,,
\end{equation}
valid for and $\circ\in\{D,N,p\}$ and $R\geq R_0$, where $C>0$ and $R_0>1$ are two constants that depend on $b$. 

Along the proof of \eqref{eq:Enew<}, the following two lemmas are needed.

\begin{lem}\label{lem:FK}(\cite{FK-cpde} and \cite{Att2})\\
There exists a constant $C>0$ such that, for all $b\in(0,1]$, $R>1$ and $\circ\in\{D,N,p\}$,
$$E_{\rm blk}(b)-\frac{C}{R}\leq\frac{e^{\circ}(b,R)}{R^2}\leq E_{\rm blk}(b)+\frac{C}{R}\,. $$
For the Dirichlet boundary condition, the lower bound is
$$E_{\rm blk}(b)\leq \frac{e^D(b,R)}{R^2}\,.$$
\end{lem}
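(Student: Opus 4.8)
The plan is to read the statement as a thermodynamic-limit estimate with an explicit surface-to-volume rate, and to prove it through a two-sided quasi-additivity of the ground state energies under doubling of the side length, from which the $O(1/R)$ rate follows by a geometric telescoping. First I would record the two a priori inputs that make every gluing and localization estimate quantitative. Every minimizer $u$ (for any of the three boundary conditions) satisfies $\|u\|_{L^\infty(Q_R)}\le 1$, obtained from the truncation $u\mapsto u\min(1,|u|^{-1})$, which does not increase $\mathcal E_{\rm blk}$; and, applying elliptic regularity to the Euler--Lagrange equation $-b(\nabla-i\Ab_0)^2u=(1-|u|^2)u$ together with $\|u\|_\infty\le1$, one gets a uniform gradient bound $\|(\nabla-i\Ab_0)u\|_{L^\infty}\le C$ with $C$ independent of $R$. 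I would also note the inclusions $e^N(b,R)\le e^p(b,R)\le e^D(b,R)$: the first because magnetic-periodic functions restrict to $H^1(Q_R)$, the second because an $H^1_0(Q_R)$ function extended by the magnetic quasi-periodicity is continuous across the cell boundaries (its vanishing Dirichlet data match through the boundary phases) and hence lies in $H^1_{\rm mag,per}(Q_R)$ with the same energy. Thus it suffices to prove the two-sided rate for one boundary condition, say Neumann, and then transfer it; the exact Dirichlet lower bound is treated separately below.

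The heart of the matter is the quasi-additivity under doubling,
\[ 4\,e^N(b,R)-C R\ \le\ e^N(b,2R)\ \le\ 4\,e^N(b,R)+C R . \]
For the upper (subadditive) bound I would tile $Q_{2R}$ by four magnetic translates of $Q_R$ and paste four copies of the Neumann minimizer $u_R$, interpolating across the two midline interfaces with cutoffs of width $1$; by the gradient bound the mismatch costs only $b\int|\nabla\chi|^2|u_R|^2$ plus a Cauchy--Schwarz cross term, which is $\le C\times(\text{interface length})=O(R)$, the magnetic translations ensuring each pasted copy carries the same energy because $\Ab_0(x-a)-\Ab_0(x)$ is the gradient of a linear function. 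For the lower (superadditive) bound I would localize the minimizer $u_{2R}$ of $e^N(b,2R)$ by a partition of unity $\{\chi_j\}_{j=1}^4$ with $\sum_j\chi_j^2=1$, subordinate to the four quadrants and transitioning over width $1$ across the midlines. The IMS identity gives $\mathcal E_{\rm blk}(u_{2R})\ge\sum_j\mathcal E_{\rm blk}(\chi_j u_{2R})-b\int(\sum_j|\nabla\chi_j|^2)|u_{2R}|^2$, where the quartic term localizes in the favourable direction since $\sum_j\chi_j^4\le(\sum_j\chi_j^2)^2=1$; the error is supported in the cross-shaped transition set of area $O(R)$ where $|u_{2R}|\le1$, hence is $O(R)$, while each $\chi_j u_{2R}$ is, up to a harmless unit enlargement of the cell absorbed into the $O(R)$ error, an admissible configuration on a magnetic translate of $Q_R$, so $\mathcal E_{\rm blk}(\chi_j u_{2R})\ge e^N(b,R)$.

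With the doubling estimate in hand, set $a(\rho)=e^N(b,\rho)/\rho^2$; dividing the displayed inequality by $(2R)^2$ gives $|a(2R)-a(R)|\le C/(4R)$, so along the dyadic sequence $L_k=2^kR_0$ the increments of $a(L_k)$ are summable with geometric rate. Since $a(L_k)\to E_{\rm blk}(b)$ by \eqref{eq:Eblk}, summing the tail yields $|a(L_k)-E_{\rm blk}(b)|\le C/L_k$, i.e. the desired $O(1/R)$ bound for $e^N$ along dyadic scales. A general $R$ is then squeezed between two consecutive dyadic scales and compared to them by the same tiling and localization with an $O(R)$ interface error, and the bound is transferred to $e^p$ and $e^D$ through the inclusions together with the Dirichletization estimate $e^D(b,R)\le e^N(b,R)+CR$ (cut a Neumann minimizer off in a boundary frame of width $1$, the cost being $O(R)$ by the two a priori bounds). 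Finally, the sharp Dirichlet lower bound $E_{\rm blk}(b)\le e^D(b,R)/R^2$, valid for all $b\in(0,1]$ and $R\ge1$, is obtained without error from pure subadditivity: covering $Q_{R'}$ by $\lfloor R'/R\rfloor^2$ translated copies of the Dirichlet minimizer on $Q_R$ extended by zero gives $e^D(b,R')\le\lfloor R'/R\rfloor^2e^D(b,R)$, and letting $R'\to\infty$ with $R$ fixed while using \eqref{eq:Eblk} forces $E_{\rm blk}(b)\le e^D(b,R)/R^2$.

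The step I expect to be the main obstacle is the superadditive (lower) bound: one must be certain that the localization error is a genuine surface term of size $O(R)$ rather than a volume term $O(R^2)$, which is exactly why the partition of unity is taken with only four cells and unit-width transitions concentrated on the interfaces, and why the quartic nonlinearity must be checked to localize in the correct direction. The second delicate point is the uniform-in-$R$ gradient estimate $\|(\nabla-i\Ab_0)u\|_\infty\le C$, since all the $O(R)$ gluing and Dirichletization costs hinge on the minimizers not concentrating their energy in the thin boundary and interface layers; this is where elliptic regularity for the magnetic Euler--Lagrange equation, uniformly in the size of the cell, does the essential work.
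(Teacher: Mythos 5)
The paper never proves Lemma~\ref{lem:FK}: it is imported as a black box from \cite{FK-cpde} and \cite{Att2}, so there is no internal argument to compare yours against. What you have written is, in substance, a reconstruction of the standard proof from that literature: $L^\infty$ bound by truncation, ordering $e^N\leq e^p\leq e^D$ via restriction/quasi-periodic extension, two-sided quasi-additivity under doubling with $O(R)$ interface errors (IMS partition with $\sum_j\chi_j^2=1$, the quartic term localizing favourably because $\sum_j\chi_j^4\leq\big(\sum_j\chi_j^2\big)^2$), dyadic telescoping to extract the $O(1/R)$ rate from the known limit \eqref{eq:Eblk}, and the sharp Dirichlet lower bound from exact subadditivity of $e^D$ under tiling by magnetic translates extended by zero. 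That last argument is exactly right and is the cleanest part of the proof; note also that your final ``squeeze between consecutive dyadic scales'' is superfluous, since the doubling estimate holds for every $R>1$ and you may simply start the telescoping at the given $R$, obtaining $|e^N(b,R)/R^2-E_{\rm blk}(b)|\leq C/R$ directly.

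Two points need repair, both minor. First, the uniform gradient bound $\|(\nabla-i\Ab_0)u\|_{L^\infty}\leq C$ is \emph{not} uniform in $b\in(0,1]$: the Euler--Lagrange equation reads $(\nabla-i\Ab_0)^2u=-b^{-1}(1-|u|^2)u$, and elliptic regularity yields only $\|(\nabla-i\Ab_0)u\|_{L^\infty}\leq Cb^{-1/2}$, which blows up as $b\to0$, whereas the constant in the Lemma must be uniform over $b\in(0,1]$. This is harmless in the end because every Cauchy--Schwarz cross term in which you use it carries the prefactor $b$, so the cost is $\leq Cb\cdot b^{-1/2}R=C\sqrt{b}\,R\leq CR$; better still, run the pasting (upper-bound) step through the same IMS identity you use for the lower bound, which needs only $\|u\|_\infty\leq1$ and dispenses with the gradient estimate altogether --- contrary to your closing remark, it is not where the essential work happens. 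Second, the ``harmless unit enlargement'' comparing $e^N(b,R+1)$ with $e^N(b,R)$ is not automatic, since $e^N$ is not a priori monotone in $R$; it requires one further localization, cutting the minimizer on $Q_{R+1}$ into a piece supported in $Q_R$ plus a unit frame whose energy is bounded below by $-\int_{\rm frame}|u|^2\geq-CR$ thanks to $\|u\|_\infty\leq1$. With these two fixes your argument is complete and uniform in $b\in(0,1]$, as the Lemma requires.
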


\begin{lem}\label{lem:FK-ap}
There exists a constant $C>0$ such that, for all $b\in(0,1]$, $R>1$ and $\circ\in\{D,N,p\}$, if $u_{b,R}$ is a minimizer of $e^{\circ}(b,R)$, then,
$$-2R^2E_{\rm blk}(b)-CR\leq \int_{Q_R}|u_{b,R}|^4\,dx\leq -2R^2E_{\rm blk}(b)+CR\,.$$
\end{lem}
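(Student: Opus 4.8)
The plan is to reduce the whole statement to a single \emph{exact} algebraic identity satisfied by any minimizer, and then feed that identity into the two-sided bound of Lemma~\ref{lem:FK}. Concretely, I would prove that, for a minimizer $u_{b,R}$ of $e^{\circ}(b,R)$,
$$\int_{Q_R}|u_{b,R}|^4\,dx=-2\,e^{\circ}(b,R)\,,$$
after which the claimed estimate is immediate: multiplying the inequalities of Lemma~\ref{lem:FK} by $-2R^2$ (which reverses them) gives exactly the asserted bounds, with $C$ replaced by $2C$.

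To obtain the identity I would deliberately avoid writing the Euler--Lagrange equation, together with its attendant discussion of boundary conditions and elliptic regularity, and instead exploit the fact that each admissible space $H^1_0(Q_R)$, $H^1(Q_R)$ and $H^1_{\rm mag,per}(Q_R)$ is a real-linear space, hence stable under the scaling $u\mapsto tu$ for $t\in\R$. Writing, for $u=u_{b,R}$,
$$T=\int_{Q_R}\big(b|(\nabla-i\Ab_0)u|^2-|u|^2\big)\,dx\,,\qquad Q=\int_{Q_R}|u|^4\,dx\geq 0\,,$$
one has $\mathcal E_{\rm blk}(tu;b,R)=t^2T+\tfrac12t^4Q=:f(t)$. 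Since $tu$ is admissible for every real $t$ and $u$ is a minimizer, the inequality $f(1)\leq f(t)$ holds for all $t\in\R$, so $t=1$ is a global minimum of the differentiable function $f$ and thus $f'(1)=2T+2Q=0$, i.e.\ $T=-Q$. Substituting this back into the energy yields $e^{\circ}(b,R)=\mathcal E_{\rm blk}(u;b,R)=T+\tfrac12Q=-\tfrac12Q$, which is precisely the desired identity $Q=-2\,e^{\circ}(b,R)$.

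Combining $Q=-2\,e^{\circ}(b,R)$ with $R^2E_{\rm blk}(b)-CR\leq e^{\circ}(b,R)\leq R^2E_{\rm blk}(b)+CR$ from Lemma~\ref{lem:FK} then closes the argument. I expect the only genuinely delicate point to be purely formal: one must ensure that a minimizer exists and truly lies in the stated space, so that the scaled family $\{tu\}$ remains admissible; here I would rely on the existence theory already recorded in the cited works \cite{AfSe,Att,FK-cpde}. The degenerate endpoint $b=1$, where $E_{\rm blk}(1)=0$ and the minimizer may be trivial, needs no separate treatment: the asserted bounds there read $-CR\leq\int_{Q_R}|u_{b,R}|^4\,dx\leq CR$, whose lower bound is automatic from $Q\geq 0$ and whose upper bound still follows from $Q=-2\,e^{\circ}(b,R)\leq 2CR$.
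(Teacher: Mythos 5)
Your proposal is correct, and it lands on exactly the same key identity as the paper, namely $\int_{Q_R}|u_{b,R}|^4\,dx=-2\,e^{\circ}(b,R)$, after which both arguments conclude identically by plugging in the two-sided bound of Lemma~\ref{lem:FK}. The only difference is how the identity is obtained. The paper writes the Euler--Lagrange equation $-b(\nabla-i\Ab_0)^2u=(1-|u|^2)u$ with the appropriate boundary condition, multiplies by $\overline{u}$ and integrates by parts; you instead differentiate $t\mapsto\mathcal E_{\rm blk}(tu;b,R)=t^2T+\tfrac12 t^4Q$ at $t=1$. These are the same computation in disguise --- testing the weak Euler--Lagrange equation against $u$ itself is precisely the first variation along the ray $\{tu\}$ --- but your phrasing is slightly more robust: it needs only that the admissible space is stable under real scaling, and it bypasses any discussion of boundary conditions, regularity of minimizers, and the justification of the integration by parts (which for $\circ=N$ and $\circ=p$ requires a word about the boundary terms). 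Your handling of the conditional nature of the statement (existence of a minimizer is a hypothesis, not something to prove) and of the endpoint $b=1$ is also fine. One incidental remark: the displayed identity in the paper's proof reads $-\tfrac12\int_{Q_R}|u_{b,R}|^2\,dx=\mathcal E_{\rm blk}(u_{b,R})$, which is evidently a typo for $-\tfrac12\int_{Q_R}|u_{b,R}|^4\,dx$ (otherwise the stated conclusion would not follow); your computation confirms the corrected version.
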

\begin{proof}
The minimizer $u_{b,R}$ satisfies the following equation
$$-b(\nabla-i\Ab_0)^2u_{b,R}=(1-|u_{b,R}|^2)u_{b,R}\quad{\rm in ~} Q_{R}\,,$$
with adequate boundary conditions along the boundary of $Q_R$ (Dirichlet for $\circ=D$, Neumann for $\circ=N$ and magnetic periodic for $\circ=p$).

Multiplying the equation of $u_{b,R}$ by $\overline{u_{b,R}}$, integating over $Q_{R}$ then applying an integration by parts, we obtain after a rearrangement of the terms,
$$-\frac12\int_{Q_R}|u_{b,R}|^2\,dx=\mathcal E_{\rm blk}(u_{b,R})=e^\circ(b,R)\,.$$
Now, applying Lemma~\ref{lem:FK} to estimate $e^{\circ}(b,R)$, we get the conclusion in 
Lemma~\ref{lem:FK-ap}. 
\end{proof}

\begin{proof}[Proof of \eqref{eq:Enew<}]
Let $b\in(0,1]$, $R>1$ and $u_{b,R}$ be a minimizer of $e^{\circ}(b,R)$ for $\circ\in\{D,N,p\}$. The ground state energy  $e^{\circ}(b,R)$ is displayed right after introducing the bulk functional $\mathcal E_{\rm blk}$ in \eqref{eq:enBLK}.

We write using in particular the definition of $\mathfrak{m}^{\circ}(b,R)$,
\begin{align*}
e^{\circ}(b,R)&=\mathcal E_{\rm blk}(u_{b,R})\\
&\geq \mathfrak{m}^{\circ}(b,R)\left(\int_{Q_R}|u_{b,R}|^4\,dx\right)^{1/2}+\frac12\int_{Q_R}|u_{b,R}|^4\,dx\,.
\end{align*} 
Next, we estimate 
the $L^4$-norm of $u_{b,R}$ by using Lemma~\ref{lem:FK-ap} to write, for some universal constant $C>0$,
$$
e^{\circ}(b,R)\geq \mathfrak{m}^{\circ}(b,R)\Big(-2R^2\,E_{\rm blk}(b)-CR\Big)_+^{1/2}+\frac12\Big(-2R^2\,E_{\rm blk}(b)-CR\Big)\,.
$$
Now, we estimate $e^{\circ}(b,R)$ as in Lemma~\ref{lem:FK}, arrange the terms and get for a possibly new value of the constant $C>0$,
$$
R^2E_{\rm blk}(b)+CR\geq R\Big(-2\,E_{\rm blk}(b)-CR^{-1}\Big)_+^{1/2}\mathfrak{m}^{\circ}(b,R)-R^2\,E_{\rm blk}(b)-CR\,.
$$
Now we select $R_0$ sufficiently large (depending on $b$) such that the term $-2\,E_{\rm blk}(b)-CR^{-1}$ is always positive for $R\geq R_0$, then we divide both sides by $R\Big(-2\,E_{\rm blk}(b)-CR^{-1}\Big)^{1/2}$ and rearrange the terms above to get \eqref{eq:Enew<}.
\end{proof}

\section{Proof of Theorem~\ref{thm:newF}: Lower bound}\label{sec:lb}

This section is devoted to the proof of the following inequality
\begin{equation}\label{eq:Enew>}
\mathfrak{m}^{\circ}(b,R)\geq -R\big(-2E_{\rm blk}(b)\big)^{1/2}-C\big(-2E_{\rm blk}(b)\big)^{-1/2}\,,
\end{equation}
valid for some universal constant $C>0$ and for all $b\in(0,1)$, $R>1$ and $\circ\in\{D,N,p\}$.

\begin{proof}[Proof of \eqref{eq:Enew>}]
Let $b\in(0,1)$, $R>1$  and $w_{b,R}$  be a minimizer of $\mathfrak{m}^{\circ}(b,R)$ for $\circ\in\{D,N,p\}$. The ground state energy  $\mathfrak{m}^{\circ}(b,R)$ is displayed right after introducing the bulk functional $\mathcal E_{\rm lin}$ in \eqref{eq:enLin}.

Let us normalize $w_{b,R}$ in $L^4$ as follows,
$$w^*_{b,R}=\frac{\big(-2R^2E_{\rm blk}(b)\big)^{1/4}}{\|w_{b,R}\|_{L^4(Q_R)}}w_{b,R}\,.$$
The $L^4$-norm of the normalized function satisfies 
$$\|  w^*_{b,R}\|_{L^4(Q_R)}=\big(-2R^2E_{\rm blk}(b)\big)^{1/4}\,.$$
By definition of the functional in \eqref{eq:enLin} and $\mathfrak{m}^{\circ}(b,R)$, we see that,
\begin{equation}\label{eq:m-en>}
\mathfrak{m}^{\circ}(b,R)=\frac{\mathcal E_{\rm lin}(w_{b,R})}{\|w_{b,R}\|_{L^4(Q_R)}^2}=\big(-2R^2E_{\rm blk}(b)\big)^{-1/2}\mathcal E_{\rm lin}(w^*_{b,R})\,.\end{equation}
Now,  we write using in particular the normalization of $w^*_{b,R}$ and the  definition of $e^{\circ}(b,R)$, 
\begin{align*}
\mathcal E_{\rm lin}(w^*_{b,R})&=\mathcal E_{\rm blk}(w^*_{b,R})-\frac12\int_{Q_R}|w^*_{b,R}|^4\,dx\\
&=\mathcal E_{\rm blk}(w^*_{b,R})+R^2E_{\rm blk}(b)\\
&\geq e^{\circ}(b,R)+R^2E_{\rm blk}(b)\,.
\end{align*} 
We estimate $e^{\circ}(b,R)$ from below using Lemma~\ref{lem:FK} to obtain,
$$\mathcal E_{\rm lin}(w^*_{b,R})\geq 2R^2E_{\rm blk}(b)-CR\,.$$
We insert this into \eqref{eq:m-en>} to get the inequality in \eqref{eq:Enew>}. The improved lower bound for the Dirichlet boundary condition holds since (cf. Lemma~\ref{lem:FK})
$$e^D(b,R)\geq R^2E_{\rm blk}(b)\,.$$
Also, this last bound trivially holds for $b=1$, since $E_{\rm blk}(1)=0$ and the spectral theory of the magnetic Laplacian with a Dirichlet boundary condition yields that $e^D(1,R)\geq 0$.
\end{proof}

\section{Application: Almog's $L^4$-bound}\label{sec:ap}

In \cite{Al}, Almog estimates the $L^4$-norm of the Ginzburg-Landau order parameter for the three dimensional functional (the proof is valid for the two dimensional functional as well). This bound is of particular importance to estimate the error terms when seeking a fine approximation of the ground state energy, as in \cite{FK-cpde, FKP}.

Here, we consider the same question as in \cite{Al} but in two dimensions. The method we give works in three dimensions as well, but we restrict to two dimensions for the sake of simplicity. 

We prove the following theorem:

\begin{thm}\label{thm:l4}
Let $\Lambda\in(0,1)$. There exist two constants $C>0$ and $\kappa_0$ such that, if $\kappa\geq \kappa_0$, $\Lambda\kappa\leq H\leq \kappa$, and $(\psi,\Ab)_{\kappa,H}$ is a critical point of \eqref{eq-3D-GLf},  then
\begin{equation}\label{eq:l4}
\frac1{|Q_\kappa|}\int_{Q_\kappa}|\psi|^4\,dx\leq \frac{C}{\kappa}+C\left(\frac{H}{\kappa}-1\right)^2\,,
\end{equation}  
where $Q_\kappa\subset \Omega$ is any square of side-length $2\kappa^{-1/2}$ and satisfying
$$\overline{Q_\kappa}\subset\big\{{\rm dist}(x,\partial\Omega)> 2\kappa^{-1/2}\big\}.$$
\end{thm}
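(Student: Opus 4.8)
The plan is to localize the Ginzburg--Landau equations to $Q_\kappa$, rescale to the magnetic length, and recognize the outcome as the constrained linear problem governed by Theorem~\ref{thm:newF}. Set $b=H/\kappa$, so that $b\in[\Lambda,1]$, and let $x_0$ be the center of $Q_\kappa$. A critical point satisfies $-(\nabla-i\kappa H\Ab)^2\psi=\kappa^2(1-|\psi|^2)\psi$ together with $|\psi|\le1$. After a gauge transformation that makes $\kappa H\Ab$ close to the reference potential $\Ab_0$ of \eqref{eq:Ab0}, I substitute $x=x_0+(\kappa H)^{-1/2}y$. Because $Q_\kappa$ has side $2\kappa^{-1/2}$ and the magnetic length is $(\kappa H)^{-1/2}$, this maps $Q_\kappa$ onto the square $Q_R$ with $R=2H^{1/2}$, and $R\sim\kappa^{1/2}\to\infty$ is precisely the thermodynamic regime of Theorem~\ref{thm:newF}. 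Moreover the rescaled equation becomes, to leading order, $-b(\nabla-i\Ab_0)^2\phi=(1-|\phi|^2)\phi$, which is exactly the bulk equation met in the proof of Lemma~\ref{lem:FK-ap}.

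To make the localization quantitative I would fix a smooth cut-off $\chi$ with $\chi\equiv1$ on $Q_\kappa$, supported within distance $\lesssim\kappa^{-1/2}$ of $Q_\kappa$ (still interior, by the hypothesis on $Q_\kappa$) and with $|\nabla\chi|\le C\kappa^{1/2}$. Testing the equation against $\chi^2\overline\psi$ and invoking the IMS-type identity $\int\chi^2|(\nabla-i\kappa H\Ab)\psi|^2=\int|(\nabla-i\kappa H\Ab)(\chi\psi)|^2-\int|\nabla\chi|^2|\psi|^2$, then rescaling $\chi\psi$ into a function $\Phi\in H^1_0(Q_{R'})$ with $R'\sim R$ and multiplying by $b$, produces the identity
\[
\mathcal{E}_{\rm lin}(\Phi;b,R')=-\|\Phi\|_{L^4(Q_{R'})}^4+\mathcal{R},
\]
where $\mathcal{R}=O(1)$ collects the localization error $\int|\nabla\chi|^2|\psi|^2$ (which is $\le C\kappa\cdot|\supp\nabla\chi|=O(1)$ since $|\psi|\le1$ and $|\supp\nabla\chi|\sim\kappa^{-1}$) and the error from replacing $\kappa H\Ab$ by $\Ab_0$.

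Now the $L^4$-constraint disappears on its own. Since $\Phi\in H^1_0(Q_{R'})$, the clean Dirichlet lower bound of Theorem~\ref{thm:newF} gives $\mathcal{E}_{\rm lin}(\Phi;b,R')\ge -R'\bigl(-2E_{\rm blk}(b)\bigr)^{1/2}\|\Phi\|_{L^4(Q_{R'})}^2$. Combined with the identity above, this is a quadratic inequality for $t=\|\Phi\|_{L^4(Q_{R'})}^2$, namely $t^2-R'\bigl(-2E_{\rm blk}(b)\bigr)^{1/2}t-\mathcal{R}\le0$, and hence $t\le R'\bigl(-2E_{\rm blk}(b)\bigr)^{1/2}+C$. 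Undoing the scaling, $\int_{Q_\kappa}|\psi|^4\,dx=(\kappa H)^{-1}\int_{Q_R}|\Phi|^4\,dy\le(\kappa H)^{-1}t^2$ and $|Q_\kappa|=4\kappa^{-1}$, so
\[
\frac1{|Q_\kappa|}\int_{Q_\kappa}|\psi|^4\,dx\le\frac{1}{4H}\Bigl(R'\bigl(-2E_{\rm blk}(b)\bigr)^{1/2}+C\Bigr)^2.
\]
The leading term is $\lesssim -2E_{\rm blk}(b)$, and $-2E_{\rm blk}(b)\le C(b-1)^2$ on $[\Lambda,1]$ because $(-2E_{\rm blk}(b))/(b-1)^2$ extends continuously to $b=1$ with the finite value $-2E_{\rm Ab}$, by \eqref{eq:Eblk-Ab}; the pure remainder part is $O(1/\kappa)$; and the cross term, of size $\sim H^{-1/2}\bigl(-2E_{\rm blk}(b)\bigr)^{1/2}\sim\kappa^{-1/2}|b-1|$, is absorbed by Young's inequality into $C(b-1)^2+C\kappa^{-1}$. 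This yields \eqref{eq:l4}.

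The main obstacle is the control of $\mathcal{R}$, and especially of the piece created by replacing the induced field $\kappa H\curl\Ab$ by the constant field: this rests on the standard a priori estimates for critical points in the regime $\Lambda\kappa\le H\le\kappa$ (bounds on $\|\psi\|_\infty$, on $\|(\nabla-i\kappa H\Ab)\psi\|_{L^2}$ and on $\|\curl\Ab-1\|$), localized to $Q_\kappa$, and keeping those contributions of size $O(1)$ after rescaling is the delicate point. A second point worth stressing is that a naive estimate only delivers an $O(\kappa^{-1/2})$ remainder; it is the quadratic (variational) structure of the inequality for $t$, together with Young's inequality applied to the cross term, that upgrades the error to the announced $O(\kappa^{-1})$.
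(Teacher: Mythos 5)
Your proposal follows essentially the same route as the paper: localize with a cut-off, test the first Ginzburg--Landau equation to obtain the identity $\mathcal E_{\rm lin}(\Phi;b,R')\le-\|\Phi\|_{L^4}^4+\mathcal R$, gauge-transform and rescale by $(\kappa H)^{-1/2}$ so that the Dirichlet lower bound of Theorem~\ref{thm:newF} applies with $R\sim\sqrt H$, and conclude via \eqref{eq:Eblk-Ab}; your quadratic inequality in $t=\|\Phi\|_{L^4}^2$ is just a repackaging of the paper's two-case analysis of \eqref{eq:decomp**}. The only imprecision is that the error from replacing $\kappa H\Ab$ by the constant-field potential is not $O(1)$ but $O(\kappa\|f\psi\|_{L^4}^2)=O(b^{-1/2}\,t)$ after rescaling (as in the paper's term $-C\kappa\|f\psi\|_4^2$); this merely shifts the linear coefficient of your quadratic inequality by a constant and does not affect the conclusion.
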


We stress again that  the estimate in \eqref{eq:l4} is proved in \cite{Al} for three dimensional domains and when $Q_\kappa=\Omega$. The proof we give to Theorem~\ref{thm:l4} is based on Theorem~\ref{thm:newF} and differs from the one used in \cite{Al}.

Sharper versions of the bound in \eqref{eq:l4} are given in \cite{FKhc2, K-sima}. However, these improved versions of \eqref{eq:l4} are based on energy expansions of the form in \eqref{eq:gseFK}. The proof of \eqref{eq:gseFK} requires a rough control of the order parameter similar to the one in \eqref{eq:l4}. In \cite{FKhc2}, a key element in the proof of \eqref{eq:gseFK} was a strong $L^\infty$ bound on the order parameter, namely (cf. \cite{FH-jems, FKhc2})
\begin{equation}\label{eq:linfty}
\|\psi\|_{L^\infty(\omega)}\leq \frac{C}{\kappa^{1/2}}+C\left(\frac{H}{\kappa}-1\right)^{1/2}\,,
\end{equation}
where $\omega\subset\subset\Omega$. Having \eqref{eq:l4} in hand, we can derive the energy expansion in \eqref{eq:gseFK} without using the $L^\infty$ bound in \eqref{eq:linfty}.

Unlike the hard proof of \eqref{eq:linfty}, the proof we give to \eqref{eq:l4} seems quite general, does not require too much regularity of the critical points and works for functionals having a similar structure as that in \eqref{eq-3D-GLf}, e.g. the functional with a  variable magnetic field or with a pinning term (cf. \cite{Att, Att3}).

The rest of this section is devoted to the proof of Theorem~\ref{thm:l4}. We will use $C$ to denote positive constants independent of $\kappa$ and $H$. The value of $C$ might change from one formula to another without  explicit notice. 

\subsection{Preliminaries}

A critical point $(\psi,\Ab)$ of the functional in \eqref{eq-3D-GLf} is a solution of the following system of PDE:
\begin{equation}\label{eq:GL*}
\left\{
\begin{array}{ll}
-(\nabla-i\kappa H\Ab)^2\psi=\kappa^2(1-|\psi|^2)\psi\,,\\
-\nabla^\bot\curl\Ab=(\kappa H)^{-1}{\rm Im}(\overline{\psi}\,(\nabla-i\kappa H\Ab)\psi)\,,\quad{\rm in}~\Omega\,,\\
\nu\cdot(\nabla-i\kappa H\Ab)\psi=0\,,\quad \curl\Ab=1\quad{\rm on~}\partial\Omega\,.
\end{array}
\right.
\end{equation}

We collect useful {\it a priori} estimates  in:
\begin{lem}\label{lem:EE}
Let $\Lambda\in(0,1)$. There exist two constants $C>0$ and $\kappa_0>0$ such that, if $\kappa\geq \kappa_0$, $\Lambda\kappa\leq H\leq \kappa$, and $(\psi,\Ab)_{\kappa,H}$ is a solution of \eqref{eq:GL*}, then,
$$\|\psi\|_\infty\leq 1\,,$$
and
$$\|\curl\Ab-1\|_{C^1(\overline{\Omega})}\leq C\kappa^{-1}\,.$$
\end{lem}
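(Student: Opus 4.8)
The two estimates are essentially independent, so I would treat them in turn. For the bound $\|\psi\|_\infty\le 1$, the plan is a maximum principle argument applied to $|\psi|^2$. Starting from the first equation in \eqref{eq:GL*} and using the gauge-covariant Bochner (Kato) identity
$$\tfrac12\Delta|\psi|^2=|(\nabla-i\kappa H\Ab)\psi|^2+\kappa^2\big(|\psi|^2-1\big)|\psi|^2,$$
I observe that on the open set $\{|\psi|^2>1\}$ the right-hand side is nonnegative, so $|\psi|^2$ is subharmonic there. Since $\psi$ is smooth by elliptic regularity, $|\psi|^2\in C^\infty(\overline\Omega)$, and an interior maximum with $|\psi|^2>1$ is immediately excluded by $\Delta|\psi|^2\le 0$ at such a point. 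A boundary maximum is excluded by combining the Neumann condition $\nu\cdot(\nabla-i\kappa H\Ab)\psi=0$, which gives $\partial_\nu|\psi|^2=2\,{\rm Re}\big(\overline\psi\,\nu\cdot(\nabla-i\kappa H\Ab)\psi\big)=0$ on $\partial\Omega$, with Hopf's lemma. Hence $\max_{\overline\Omega}|\psi|^2\le 1$.

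For the bound on $\curl\Ab-1$, the key observation is that the second equation in \eqref{eq:GL*} expresses $\nabla^\bot(\curl\Ab)$ directly in terms of the supercurrent $j:={\rm Im}\big(\overline\psi\,(\nabla-i\kappa H\Ab)\psi\big)$, and that $|\nabla^\bot(\curl\Ab)|=|\nabla(\curl\Ab)|$. Thus it suffices to bound $j$ pointwise. Using the first estimate $\|\psi\|_\infty\le 1$, I reduce this to a uniform gradient estimate
$$\|(\nabla-i\kappa H\Ab)\psi\|_{L^\infty(\overline\Omega)}\le C\kappa,$$
after which $\|j\|_\infty\le\|\psi\|_\infty\,\|(\nabla-i\kappa H\Ab)\psi\|_\infty\le C\kappa$. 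I would establish the gradient bound by elliptic regularity applied to $-(\nabla-i\kappa H\Ab)^2\psi=\kappa^2(1-|\psi|^2)\psi$: since $\|\psi\|_\infty\le 1$ the right-hand side is bounded by $\kappa^2$, and rescaling to the natural magnetic length $(\kappa H)^{-1/2}\sim\kappa^{-1}$ (so that the rescaled operator and data are of order one) converts this into an interior/boundary Schauder estimate with the Neumann condition; this is the standard a priori gradient bound for Ginzburg--Landau solutions in the regime $\Lambda\kappa\le H\le\kappa$ (cf. \cite{FH-b}).

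Granting the current bound, the conclusion is immediate. From the second equation, $\|\nabla(\curl\Ab)\|_{L^\infty(\overline\Omega)}=(\kappa H)^{-1}\|j\|_\infty\le C(\kappa H)^{-1}\kappa\le C\Lambda^{-1}\kappa^{-1}$, using $H\ge\Lambda\kappa$. Since $\curl\Ab=1$ on $\partial\Omega$, integrating the gradient bound along a path inside $\Omega$ joining each point to the boundary yields $\|\curl\Ab-1\|_{L^\infty}\le C\kappa^{-1}$, the constant depending on $\Omega$. Adding the two contributions gives the claimed $C^1(\overline\Omega)$ bound.

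The technical heart, and the step I expect to be the main obstacle, is the uniform gradient estimate $\|(\nabla-i\kappa H\Ab)\psi\|_\infty\le C\kappa$ up to the boundary. The difficulty is twofold: controlling the constant uniformly as $\kappa\to\infty$ requires the correct rescaling so that the elliptic estimate is applied to an order-one problem, and there is a mild coupling with $\Ab$, since the Schauder estimate needs control of the magnetic potential. This is resolved in the usual way by working in a good gauge and bootstrapping: the $L^2$ energy identity gives $\|(\nabla-i\kappa H\Ab)\psi\|_{L^2}^2\le\kappa^2|\Omega|$, which through the second equation yields $\|\curl\Ab-1\|_{L^2}\le C\kappa^{-1}$ and hence enough regularity of $\Ab$ to close the elliptic estimate for $\psi$. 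None of these steps is delicate individually, but their order matters, and I would carry out the maximum principle first since the bound $\|\psi\|_\infty\le 1$ feeds into every subsequent estimate.
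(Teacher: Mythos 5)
The paper does not prove this lemma at all --- it simply refers to \cite{FH-b} --- and your sketch is exactly the standard argument from that reference: the Kato/Bochner identity plus the maximum principle (with the Neumann condition killing the normal derivative of $|\psi|^2$) for $\|\psi\|_\infty\le 1$, and the rescaled elliptic gradient estimate $\|(\nabla-i\kappa H\Ab)\psi\|_\infty\le C\sqrt{\kappa H}\,\|\psi\|_\infty$ fed into the second Ginzburg--Landau equation for the $C^1$ bound on $\curl\Ab-1$. The argument is correct, and you rightly identify the uniform gradient estimate (and the order of the bootstrap with the a priori control of $\Ab$) as the only genuinely technical ingredient.
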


We refer the reader to \cite{FH-b} for the proof of Lemma~\ref{lem:EE}. Based on the estimates in Lemma~\ref{lem:EE}, one can construct the  gauge transformation given in the next lemma (cf. \cite[Eq.~(5.30)]{FKhc2}):

\begin{lem}\label{lem:gt}
Let $\Lambda\in(0,1)$. There exist two constants $C>0$ and $\kappa_0>0$ such that the following is true. 

Suppose that $\kappa\geq \kappa_0$, $\Lambda\kappa\leq H\leq \kappa$,  and $(\psi,\Ab)_{\kappa,H}$ is a solution of \eqref{eq:GL*}. Let $\ell\in(0,1)$ and $B_\ell\subset\Omega$ be a disk of radius $2\ell$. There exists a function $\phi\in H^2(B_\ell)$ such that,
\begin{equation}\label{eq:gt}\forall~x\in\overline{B_\ell}\,,\quad\big|\Ab(x)-\big(\Ab_0(x)-\nabla\phi(x)\big)\big|\leq C\kappa^{-1}\ell\,.\end{equation}
Here, $\Ab_0$ is the magnetic potential in \eqref{eq:Ab0}.
\end{lem}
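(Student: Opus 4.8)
The plan is to realize the gauge function $\phi$ through a Hodge-type decomposition of the difference $\mathbf a:=\Ab-\Ab_0$ on the disk $B_\ell$, exploiting that its curl is quantitatively small. Since $\curl\Ab_0=1$, Lemma~\ref{lem:EE} gives
\[
\curl\mathbf a=\curl\Ab-1=:f,\qquad \|f\|_{L^\infty(B_\ell)}\le \|\curl\Ab-1\|_{C^1(\overline\Omega)}\le C\kappa^{-1}.
\]
The guiding idea is that $\mathbf a$ fails to be a pure gauge only through the divergence-free contribution carrying this small curl, which can be solved away explicitly.

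First I would introduce the scalar potential $\chi$ solving the Dirichlet problem
\[
\Delta\chi=f\ \text{ in }B_\ell,\qquad \chi=0\ \text{ on }\partial B_\ell.
\]
Since $f\in L^\infty(B_\ell)\subset L^2(B_\ell)$ and $B_\ell$ has smooth boundary, standard elliptic regularity yields a unique $\chi\in H^2(B_\ell)$. Recalling the identity $\curl(\nabla^\bot\chi)=\Delta\chi$, the field $\nabla^\bot\chi-\mathbf a$ is curl-free on the \emph{simply connected} disk $B_\ell$, hence a gradient by the Poincar\'e lemma: there exists $\phi\in H^2(B_\ell)$ with $\nabla\phi=\nabla^\bot\chi-\mathbf a$. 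Rearranging,
\[
\Ab-\big(\Ab_0-\nabla\phi\big)=\mathbf a+\nabla\phi=\nabla^\bot\chi,
\]
so the pointwise error in \eqref{eq:gt} equals exactly $|\nabla^\bot\chi|=|\nabla\chi|$, and the whole lemma reduces to bounding $\|\nabla\chi\|_{L^\infty(B_\ell)}$.

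The main obstacle — and the only place where the geometric scale $\ell$ must be tracked with care — is to show $\|\nabla\chi\|_{L^\infty(B_\ell)}\le C\kappa^{-1}\ell$ with $C$ independent of $\ell$ and $\kappa$. The cleanest route is to rescale to a fixed domain: writing $B_\ell$ as the disk of radius $2\ell$ and setting $\tilde\chi(y)=\chi(\ell y)$, $\tilde f(y)=\ell^2 f(\ell y)$ on the disk of radius $2$, the equation becomes $\Delta_y\tilde\chi=\tilde f$ with $\|\tilde f\|_{L^\infty}=\ell^2\|f\|_{L^\infty}$. On this fixed domain the $W^{2,p}$ estimate for the Dirichlet Laplacian, combined with the Sobolev embedding $W^{2,p}\hookrightarrow C^1$ (any $p>2$), yields a \emph{universal} constant with $\|\nabla_y\tilde\chi\|_{L^\infty}\le C\|\tilde f\|_{L^\infty}$. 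Undoing the scaling via $\nabla_x\chi(x)=\ell^{-1}(\nabla_y\tilde\chi)(x/\ell)$ gives
\[
\|\nabla\chi\|_{L^\infty(B_\ell)}\le \ell^{-1}\cdot C\ell^2\|f\|_{L^\infty(B_\ell)}=C\ell\,\|f\|_{L^\infty(B_\ell)}\le C\kappa^{-1}\ell,
\]
which together with the identity above is \eqref{eq:gt}. The regularity $\phi\in H^2(B_\ell)$ follows since $\nabla\phi=\nabla^\bot\chi-\mathbf a\in H^1(B_\ell)$, using $\chi\in H^2$ and $\Ab\in H^1$. I expect the construction of $\chi$ and the Hodge splitting to be routine; the care lies entirely in the scaling bookkeeping that produces the factor $\ell$ rather than $1$ or $\ell^2$.
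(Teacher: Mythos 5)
Your proof is correct, but it takes a genuinely different route from the paper, which in fact gives no in-text proof at all: it simply cites the gauge construction of \cite[Eq.~(5.30)]{FKhc2} (in the spirit of the gauge lemmas in \cite{FH-b}). The cited construction sets $\mathbf a=\Ab-\Ab_0$ and removes the small curl by an \emph{explicit} Poincar\'e-gauge primitive centered at the center $x_0$ of the disk, namely $w(x)=\bigl(\int_0^1 t\,(\curl\Ab-1)(x_0+t(x-x_0))\,dt\bigr)(x-x_0)^{\bot}$, which satisfies $\curl w=\curl\Ab-1$ and carries the immediate pointwise bound $|w(x)|\leq\tfrac12|x-x_0|\,\|\curl\Ab-1\|_{L^\infty}\leq C\kappa^{-1}\ell$; then $\mathbf a-w$ is curl-free on the simply connected disk, hence a gradient, and \eqref{eq:gt} follows with no PDE machinery. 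You instead solve the Dirichlet problem $\Delta\chi=\curl\Ab-1$ on $B_\ell$ and identify the gauge error with $\nabla^{\bot}\chi$; your scaling bookkeeping is exactly right — the $W^{2,p}$ estimate ($p>2$) plus Sobolev embedding on the fixed disk of radius $2$, combined with $\|\tilde f\|_{L^\infty}=\ell^2\|f\|_{L^\infty}$ and the factor $\ell^{-1}$ from undoing the dilation, produces the linear factor $\ell$ rather than $1$ or $\ell^2$ — and the regularity claims check out ($\chi\in W^{2,p}$ for all $p$, so $\nabla\chi\in C^0(\overline{B_\ell})$ and the bound holds pointwise up to the boundary; $\phi\in H^2$ since $\nabla\phi=\nabla^{\bot}\chi-\mathbf a\in H^1$, $\Ab$ being in $H^1$). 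What the explicit primitive buys is elementarity: the estimate is read off from the formula with no elliptic regularity. What your Poisson-equation route buys is robustness: it needs only $\curl\Ab-1\in L^\infty$ (even $L^p$ would do, at the price of an $L^p$ right-hand side in \eqref{eq:gt}) and transfers verbatim to other simply connected domains. One cosmetic slip: you rescale as though $B_\ell$ were centered at the origin, while the lemma allows an arbitrary disk in $\Omega$; translate before dilating — this is harmless precisely because, unlike the cited construction, your argument never needs to recenter $\Ab_0$.
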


\subsection{Local estimates - Proof of Theorem~\ref{thm:l4}}

Here we work under the assumptions in Theorem~\ref{thm:l4}. We will estimate the following local energy of the critical configuration $(\psi,\Ab)$:
\begin{equation}\label{eq:gse-loc}
\mathcal E_{0,\kappa}(\psi,\Ab)=\int_{Q_{\kappa}}\left(|(\nabla-i\kappa H\Ab)\psi|^2-\kappa^2|\psi|^2+\frac{\kappa^2}2|\psi|^4\right)\,dx\,.
\end{equation}
Let $Q_{2\kappa}$ be the square having the same center as $Q_\kappa$ but with side-length $4\kappa^{-1/2}$, i.e. twice the side-length of $Q_\kappa$. Obviously the square $Q_{2\kappa}$ contains $\overline{Q_\kappa}$. Let  $f\in C_c^\infty(Q_{2\kappa})$ be a cut-off function satisfying, for all $\kappa\geq 1$,
$$f=1\quad{\rm in~} Q_{\kappa},\quad 0\leq f\leq 1{~\rm and~}|\nabla f|\leq C\kappa^{1/2}\quad {\rm in~}Q_{2\kappa}\,,$$
where $C$ is a constant independent of $\kappa$.

An integration by parts and the first equation in \eqref{eq:GL*} yield (cf. \cite[Eq.~(6.2)]{FKhc2}),
\begin{equation}\label{eq:decomp*}
\begin{aligned}
\mathcal  E_{0,2\kappa}(f\psi,\Ab)&=\kappa^2\int_{Q_{2\kappa}}f^2\left(-1+\frac12f^2\right)|\psi|^4\,dx+\int_{Q_{2\kappa}}|\nabla f|^2|\psi|^2\,dx\\
&\leq C\,.
\end{aligned}
\end{equation}
Note that we dropped the term involving  $-1+\frac12f^2$ since $0\leq f\leq 1$. The term involving $|\nabla f|$ is estimated using the bounds $\|\psi\|_\infty\leq 1$, $|\nabla f|\leq C\kappa^{1/2}$ and $|Q_{2\kappa}|\leq C\kappa^{-1}$.

Now we estimate the linear energy
\begin{equation}\label{eq:linGL}
\mathcal L_{0,\kappa}(f\psi,\Ab)=  \int_{Q_{2\kappa}}\Big(|(\nabla-i\kappa H\Ab)f\psi|^2-\kappa^2|f\psi|^2\Big)\,dx\,.
\end{equation}
Choose $C>0$ large enough such that $Q_{2\kappa}\subset B_{C\kappa^{-1/2}}$, then apply Lemma~\ref{lem:gt} in $B_{C\kappa^{-1/2}}$ to get a function $\phi\in H^2(B_{C\kappa^{-1/2}})$ satisfying the estimate in \eqref{eq:gt}. Notice that, using the Gauge invariance then the Cauchy-Schwarz inequality,
\begin{align*}
\mathcal L_{0,\kappa}(f\psi,\Ab)&=\mathcal L_{0,\kappa}\Big(f\psi e^{-i\kappa H\phi},\Ab-\nabla\phi\Big)\\
&\geq \int_{Q_{2\kappa}}\Big((1-\kappa^{-1/2})|(\nabla-i\kappa H\Ab_0)f\psi e^{-i\kappa H\phi}|^2-\kappa^2|f\psi e^{-i\kappa H\phi}|^2-C\kappa^{3/2}|f\psi|^2\Big)dx\,.
\end{align*} 
Let $b=(1-\kappa^{-1/2})\frac{H}{\kappa}$, $R=\kappa^{-1/2}\sqrt{\kappa H}=\sqrt{H}$ and $x_\kappa$ be the center of the square $Q_{2\kappa}$. Apply the change of variable $y=\sqrt{\kappa H}\,(x-x_\kappa)$ to get,
$$
\mathcal L_{0,\kappa}(f\psi,\Ab)\geq \kappa^{3/2}H^{-1/2}\mathfrak m^D(b,R)\|f\psi\|_4^2
-C\kappa^{3/2}\|f\psi\|_2^2\,,$$
where $\mathfrak m^D(b,R)$ is the energy introduced in \eqref{eq:gse-const-D}. We use the lower bound for $\mathfrak m^D(b,R)$ in Theorem~\ref{thm:newF} and H\"older's inequality for the term  $\|f\psi\|_2$ to get,
$$
\mathcal L_{0,\kappa}(f\psi,\Ab)\geq \kappa^{3/2}H^{-1/2}\Big(-\big(-2E_{\rm blk}(b)\big)^{1/2}\Big)R\|f\psi\|_4^2
-C\kappa\|f\psi\|_4^2\,.$$
Recall that $R=\sqrt{H}$ and insert the result into \eqref{eq:linGL} and the right side of \eqref{eq:decomp*} to get, after a rearrangement of the terms,
\begin{equation}\label{eq:decomp**}
\kappa^{3/2}\Big\{\Big(-|2E_{\rm blk}(b)|^{1/2}-C\kappa^{-1/2}\Big)+\frac{\kappa^{1/2}}2\|f\psi\|_4^2\Big\}\|f\psi\|_4^2
\leq C\,.
\end{equation}
Two cases may occur:
\begin{itemize}
\item {\bf Case~I:}
$$\Big(-|2E_{\rm blk}(b)|^{1/2}-C\kappa^{-1/2}\Big)+\frac{\kappa^{1/2}}2\|f\psi\|_4^2\leq \kappa^{-1/2}\,.$$
\item {\bf Case~II:}
$$\Big(-|2E_{\rm blk}(b)|^{1/2}-C\kappa^{-1/2}\Big)+\frac{\kappa^{1/2}}2\|f\psi\|_4^2\geq \kappa^{-1/2}\,.$$
\end{itemize}
Clearly, in both cases, \eqref{eq:decomp**} yields the following upper bound:
\begin{equation}\label{eq:l4ub}
\|f\psi\|_4^2\leq 2\kappa^{-1/2}|2E_{\rm blk}(b)|^{1/2}+C\kappa^{-1}\,.
\end{equation}
Since $f=1$ in $Q_\kappa$, then \eqref{eq:l4ub} says that
\begin{equation}\label{eq:l4ub*}
\left(\int_{Q_\kappa}|\psi|^4\,dx\right)^{1/2}\leq 2\kappa^{-1/2}|2E_{\rm blk}(b)|^{1/2}+C\kappa^{-1}\,.
\end{equation}
Recall that $b=(1-\kappa^{-1/2})\frac{H}{\kappa}\leq 0$. The assumption in Theorem~\ref{thm:l4} and the formula in \eqref{eq:Eblk-Ab} together yield that
$$|2E_{\rm blk}(b)|\leq C|b-1|\leq C\left|\frac{H}\kappa-1\right|+C\kappa^{-1/2}\,.$$
Inserting this into \eqref{eq:l4ub*} and remembering that $|Q_\kappa|=2\kappa^{-1}$, we get the estimate in \eqref{eq:l4}. This finishes the proof of Theorem~\ref{thm:l4}. 

\subsection*{Acknowledgments} The author is supported by the Lebanese University research funds.


\begin{thebibliography}{100}

\bibitem{AfSe} A. Aftalion, S. Serfaty. Lowest Landau level approach in superconductivity for the Abrikosov lattice close to $HC2$.
{\it Selecta Math. (N.S.)} {\bf 13} (2007), 183-202.

\bibitem{Al} Y. Almog. Non-linear surface superconductivity in three dimensions in the large $\kappa$ limit. {\it Communications in Contemporary Mathematics,}
Vol. 6, No. 4 (2004) 637-652.

\bibitem{Att} K. Attar. The ground state energy of the two
dimensional Ginzburg-Landau functional with variable magnetic field.
{\it Ann. I. H. Poincar\'e-AN,} {\bf 32} (2015) 325-345.
 
\bibitem{Att2} K. Attar. Energy and vorticity of the Ginzburg-Landau model with variable magnetic field.
 {\it Asymptotic Analysis,} Vol.~93, no.~1-2, (2015). 

\bibitem{Att3} K. Attar. Pinning with a variable magnetic field of the two dimensional Ginzburg-Landau model. arXiv:1503.06500. 

\bibitem{dG} P.G. de\,Gennes. {\it Superconductivity of Metals and Alloys.} Benjamin, New York, 1966.

\bibitem{FH-b} S. Fournais, B. Helffer. {\it Spectral Methods in
Surface Superconductivity.} Progress in Nonlinear Differential
Equations and Their Applications. {\bf 77} Birkh\"auser (2010).

\bibitem{FH-jems} S. Fournais, B. Helffer.  Bulk superconductivity in Type II
superconductors near the second critical field. {\it J. Eur. Math. Soc.} {\bf 12} (2010) 461-470.

\bibitem{FK-cpde} S. Fournais, A. Kachmar. The ground state energy
of the three dimensional Ginzburg-Landau functional. Part~I. Bulk
regime. {\it Communications in Partial Differential Equations.} {\bf
38} 339--383 (2013).


\bibitem{FKhc2} S. Fournais, A. Kachmar.  Nucleation of bulk superconductivity close to critical magnetic field. {\it Adv. Math.} {\bf 226} (2011) 1213-1258.

\bibitem{FKP} S. Fournais, A. Kachmar, M. Persson. The ground state energy
of the three dimensional Ginzburg-Landau functional. Part~II. Surface
regime. {\it J. Math. Pures Appl.} {\bf 99} (2013) 343-374.

\bibitem{FR} S. Fournais, N. Raymond. Optimal magnetic Sobolev constants in the semi-classical limit. {\it Annales de l'Institut Henri Poincar\'e - Analyse
Non-Lin\'eaire} (in press). arXiv:1411.5554v1.

\bibitem{K-sima} A. Kachmar. The Ginzburg-Landau order parameter near the second critical field. {\it SIAM. J. Math. Anal} {\bf 46} (1) (2014) 572-587. 


\bibitem{SS-b} E. Sandier, S. Serfaty. {\it Vortices in the Magnetic Ginzburg-Landau Model.} Progress in Nonlinear Differential Equations and Their Applications,
{\bf 70}, Birkh\"auser, Boston, 2007.

\bibitem{SS02} E. Sandier, S. Serfaty. The decrease of bulk
superconductivity close to the second critical field in the
Ginzburg-Landau model. {\it SIAM. J. Math. Anal.} {\bf 34} No. 4
(2003), 939--956.
\end{thebibliography}
\end{document}